\newcommand\QQ{\ensuremath{\mathbb{Q}}}
\newcommand\ZZ{\ensuremath{\mathbb{Z}}}
\newcommand\PP{\ensuremath{\mathbb{P}}}
\newcommand\FF{\ensuremath{\mathbb{F}}}
\newcommand\pp{\ensuremath{\mathfrak{p}}}
\newcommand\mm{\ensuremath{\mathfrak{m}}}
\newcommand\oo{\ensuremath{\mathcal{O}}}
\newcommand\xx{\ensuremath{\mathcal{X}}}
\newcommand\Per{\ensuremath{\mathcal{P}}}
\newcommand\LL{\ensuremath{\mathcal{L}}}
\newcommand\TT{\ensuremath{\mathcal{T}}}
\newcommand{\set}[1]{\left\{#1\right\}}
\DeclareMathOperator\End{End}
\DeclareMathOperator\Spec{Spec}
\DeclareMathOperator\modulo{mod}
\DeclareMathOperator\ns{ns}
\newtheorem{theorem}{Theorem}[section]
\newtheorem{corollary}[theorem]{Corollary}
\newtheorem{lemma}[theorem]{Lemma}
\newtheorem{proposition}[theorem]{Proposition}
\newtheorem*{question}{Question}
\theoremstyle{remark}
\newtheorem*{remark}{Remark}
\begin{document}
\onehalfspacing

\title[On boundedness of periods of self maps]{On boundedness of periods of self maps of algebraic varieties}

\author{Manodeep Raha}
\address{School of Mathematics, Tata Institute of Fundamental Research, Mumbai, India 400005}
\curraddr{}
\email{rahamono@math.tifr.res.in}
\thanks{}

\date{\today}

\keywords{Periods, Rational points, Tower of totally ramified extensions, Algebraic varieties}

\subjclass[2020]{Primary 11G35, 11G25, 37P55, 14G05}

\maketitle

\begin{abstract}
Let $X$ be an algebraic variety over a field $K \subset \overline{\QQ_p}$ and $f$ be a self map. When $K$ is a local field, the boundedness of $f$-periods in $X(K)$ is a well studied question. We will study the same question for certain infinite extensions over $\QQ_p$ under some conditions.
\end{abstract}

\section{Introduction}

Let $S$ be a set and $f : S \to S$ be a map. A point $P \in S$ is called a $f$-\emph{periodic} point if $f^n(P) = P$ for some positive integer $n$. The smallest such number is called the $f$-\emph{period} of $P$. Define $\Per (S, f)$ be the set of all possible $f$-periods, i.e.,
\begin{align*}
\Per (S, f) &= \set{n \mid n \text{ is a } f \text{-period of some } P \in S}
\end{align*}

In \cite{naf}, Fakhruddin proved :

\begin{theorem}
\label{naf_thm} 
Let $p$ be a prime and $\oo$ be the ring of integers of a local field $F$ over $\QQ_p$. For a proper variety $\xx$ over $\Spec (\oo)$, there exists a constant $m > 0$ such that $\Per (\xx (F), f) \le m$ for any $\oo$-morphism $f : \xx \to \xx$.
\end{theorem}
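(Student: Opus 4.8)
The plan is to combine the geometry of the special fibre with a $p$-adic analysis of the residue discs, splitting the period into a ``special fibre'' part and a ``fibral'' part. Since $\xx$ is proper over $\Spec(\oo)$ and $\oo$ is a discrete valuation ring with fraction field $F$, the valuative criterion of properness gives $\xx(F) = \xx(\oo)$; thus every $F$-point is an $\oo$-point and reduces to a $k$-point of the special fibre $\xx_k := \xx \times_{\Spec \oo} \Spec k$, where $k = \oo/\pp$ is the finite residue field of size $q$. The morphism $f$ reduces to a $k$-morphism $\bar f$, and reduction commutes with iteration, so if $P$ has $f$-period $n$ then its reduction $\bar P$ has $\bar f$-period $\bar n$ dividing $n$. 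Because $\xx_k$ is proper and of finite type over the finite field $k$, the set $\xx_k(k)$ is finite, whence $\bar n \le \#\xx_k(k) =: N_0$, a bound independent of $f$. Writing $n = \bar n \cdot n'$, it remains to bound the fibral part $n'$ uniformly in $f$.

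Replacing $f$ by $g := f^{\bar n}$ makes $\bar P$ a fixed point of $\bar g$, and $P$ now has $g$-period $n'$ while lying in the residue disc $D$ of $\oo$-points reducing to $\bar P$. I would analyse $D$ through the completed local ring $R := \widehat{\oo_{\xx,\bar P}}$: choosing generators $t_1,\dots,t_M$ of its maximal ideal $\mm_R$ embeds $D$ into $\pp^M$ via $Q \mapsto (t_1(Q),\dots,t_M(Q))$ (this keeps the argument valid even at singular points of $\xx_k$, where $R$ is not a power series ring), and $g$ acts through the continuous $\oo$-algebra endomorphism $g^* : R \to R$, given in these coordinates by $M$ power series with $\oo$-coefficients whose linear part is the matrix $A$ of $g^*$ on $\mm_R/\mm_R^2$. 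Reducing mod $\pp$ gives the linearization $\bar A$ of $\bar g$ at $\bar P$. Its eigenvalues split into those lying in $\pp$ and those that are units: in the directions where $\bar A$ is nilpotent the map is strictly contracting, so $g^{M}$ raises $\min_i v(t_i(Q))$ and no non-fixed point there is periodic; in the remaining directions the unit eigenvalues lie in $\FF_{q^{M!}}$ and hence have multiplicative order dividing $r_0 := q^{M!}-1$. Raising $g$ to the power $r_0$ makes $\bar A^{r_0}$ unipotent, and a further $p$-power bounded in terms of $M$ makes it trivial; since the exponents absorbed so far are bounded independently of $f$, it suffices to bound periods in the resulting tangent-to-the-identity situation.

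The remaining and hardest step is the $p$-adic dynamical estimate for $g$ tangent to the identity on $D$. Here I would run the valuation filtration: writing $w = \min_i v(t_i(Q))$, one checks $g(Q) \equiv Q \pmod{\pi^{\,w+1}}$, and more precisely that the iterates $g^{p^j}$ approach the fixed locus at a controlled rate, so that the growth of $v\big(t_i(g^{p^j}Q) - t_i(Q)\big)$ with $j$ — an iterative-logarithm / Lubin-type analysis of power series congruent to the identity — forces any non-fixed periodic point to have period an exact power of $p$, bounded in terms of the ramification index $e = e(F/\QQ_p)$ (reflecting that a shorter return would otherwise produce torsion of impossibly large order, as for formal groups over $\oo$). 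This yields a bound $n'' \le p^{\lfloor e/(p-1)\rfloor + 1}$, or a similar explicit one, again independent of $f$; combining $n = \bar n \cdot r_0 r_0' \cdot n''$ with all factors bounded by constants depending only on $\xx/\oo$ produces the desired uniform $m$. I expect the genuine obstacle to be exactly this last estimate: making the tangent-to-the-identity analysis simultaneously rigorous at singular $\bar P$ and uniform over the arbitrary morphism $f$. The embedding into $\pp^M$ and the valuation bookkeeping above are designed precisely to secure that uniformity.
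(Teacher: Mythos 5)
Your outer decomposition is exactly the one the paper runs in \autoref{naf_lemma} (following \cite{naf} and \cite[Theorem 1.2]{huang}): the special-fibre period is bounded by $N=\#\xx_s(k)$, the power needed to trivialize the linearization on the cotangent space is bounded by $(\#k)^d-1$, and the leftover is a $p$-power bounded by $p^{v(p)}$, all uniform in $f$ because these constants depend only on $\xx_s$. But the core of the argument is missing, and you flag it yourself. Two steps are asserted rather than proved. First, the splitting of the residue-disc dynamics into ``contracting'' (topologically nilpotent eigenvalue) and ``unit eigenvalue'' directions is not available as stated: the nonlinear terms of the power series mix the two blocks, $\bar A$ need not be semisimple, the residue field is not algebraically closed, and an invariant decomposition would have to be manufactured (e.g.\ via idempotents in $\oo[A]$ by Hensel lifting) before any per-direction contraction estimate makes sense. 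Second, your concluding ``Lubin-type'' bound $n'' \le p^{\lfloor e/(p-1)\rfloor+1}$ is precisely the hard content of the theorem, and your per-point setup gives you nothing to run it on: you only know that a single point returns after $n''$ steps, not that any map has finite order, so the iterative-logarithm analysis you gesture at has no finite-order input to exploit.

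The device that closes both gaps simultaneously --- used in the paper's \autoref{naf_lemma} and in \cite{naf} --- is the one move your proposal never makes: since $P$ is periodic, pass to the \emph{reduced Zariski closure} $Z$ of the $g$-orbit of $P$. Then $Z$ is finite over $\Spec(\oo)$ with a unique closed point, so $Z=\Spec(A)$ with $A$ a finite local $\oo$-algebra, reduced and hence $\oo$-torsion-free; because $g$ permutes a dense subset and $Z$ is reduced, $g$ restricts to an \emph{automorphism of $A$ of finite order equal to the period}. This converts your entire dynamical programme into short algebra: an automorphism of finite order inducing the identity on $\mm/\mm^2$ acts unipotently on the graded pieces of the $\mm$-adic filtration, which are $k$-vector spaces in characteristic $p$; a finite-order unipotent operator in characteristic $p$ has $p$-power order, and the exponent is bounded by $v(p)$ (\cite[Proposition 3]{naf}), torsion-freeness of $A$ letting one descend identities from the graded pieces back to $A$. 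The ``nilpotent directions'' problem evaporates as well, since on the orbit closure the induced map is invertible by construction. Your valuation bookkeeping on the disc could probably be pushed through with substantial extra work, but as written the proof has a genuine hole exactly where you predicted the obstacle, and the orbit-closure lemma is the missing idea that fills it.
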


For a prime $p$, define $\Per_{(p)} (S, f)$ be the \textit{prime-to-$p$} part of $\Per (S, f)$, i.e., $$\Per_{(p)} (S, f) = \Per (S, f) \cap \set {n \, | \, n \text{ is coprime to } p}.$$ Then we show :

\begin{theorem}
\label{thma}
Let $\oo$ be the ring of integers of a local field $F$ over $\QQ_p$ and $K$ be a totally ramified infinite algebraic extension over $F$. For a proper variety $\xx$ over $\Spec (\oo)$, there exists a constant $m > 0$ such that $\Per_{(p)} (\xx (K), f) \le m$ for any $\oo$-morphism $f : \xx \to \xx$.
\end{theorem}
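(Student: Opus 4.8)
Since $\Per_{(p)}$ records only periods coprime to $p$, fix a point $P \in \xx(K)$ whose $f$-period $n$ is coprime to $p$; the goal is a bound on $n$ depending only on $\xx$. Let $\oo_K \subset K$ be the valuation ring. The plan is first to pass from $K$-points to integral points: as $\xx \to \Spec(\oo)$ is proper and $\oo_K$ is a valuation ring with fraction field $K$, the valuative criterion \cite[Thm.~II.4.7]{hartshorne} identifies $\xx(K)$ with $\xx(\oo_K)$, and the $\oo$-morphism $f$ preserves $\xx(\oo_K)$. Because $K/F$ is totally ramified, $\oo_K$ has the same finite residue field $\FF_q$, $q = p^s$, as $\oo$, so reduction modulo the maximal ideal $\mm_K$ gives an $f$-equivariant map $\rho : \xx(\oo_K) \to \xx_s(\FF_q)$ onto the $\FF_q$-points of the special fibre $\xx_s$, intertwining $f$ with the reduced self map $\bar f$. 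The target is finite of cardinality $N_0 := \#\xx_s(\FF_q)$, depending only on $\xx$ and $q$, so $\bar P := \rho(P)$ has $\bar f$-period $m \mid n$ with $m \le N_0$ (in particular $m$ coprime to $p$).

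Next I would linearise at the residual fixed point. Put $g := f^m$, so $\bar g$ fixes the closed point $\bar P$, $g$ preserves the tube $\rho^{-1}(\bar P)$, and $P$ is a $g$-periodic point there of period $r := n/m$, still coprime to $p$. The morphism $g$ induces an $\oo$-algebra endomorphism of the complete local ring $\widehat{\oo_{\xx,\bar P}}$ fixing its maximal ideal, hence a linear map $\Lambda$ on the cotangent space $\mathfrak m_{\bar P}/\mathfrak m_{\bar P}^2$ of $\xx_s$ at $\bar P$. This is an $\FF_q$-vector space of dimension at most the embedding dimension of $\xx_s$ at its finitely many $\FF_q$-points, a uniform bound $d$, so $\Lambda \in \mathrm{Mat}_d(\FF_q)$. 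Its eigenvalues lie in $\overline{\FF_q}$ with degree $\le d$ over $\FF_q$, so the nonzero ones are roots of unity of order dividing $T_0 := \mathrm{lcm}_{1 \le i \le d}(q^i - 1)$, again depending only on $\xx$. Let $t$ be the least common multiple of these orders, so $t \mid T_0$ and $t$ is coprime to $p$.

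The crux is the following lemma, which I would prove by a valuation estimate on the tube in the spirit of the good-reduction theory of multipliers of periodic points \cite[Ch.~2]{silverman_book}: \emph{if a self map fixing $\bar P$ has residual tangent map with all nonzero eigenvalues equal to $1$, then every periodic point in $\rho^{-1}(\bar P)$ has period a power of $p$.} Granting this, the tangent map of $g^t$ is $\Lambda^t$, whose eigenvalues are the $t$-th powers of those of $\Lambda$ and hence lie in $\{0,1\}$; so every $g^t$-periodic point in the tube has $p$-power period. The $g^t$-period of $P$ is $r/\gcd(r,t)$, which is coprime to $p$ (as $r$ is) and a power of $p$, hence equals $1$; thus $r \mid t \mid T_0$. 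Together with $m \le N_0$ this gives $n = mr \le N_0 \, T_0$, a bound independent of $f$, proving \Cref{thma}. The main obstacle is precisely this lemma: one must control the formal dynamics on the tube of a \emph{possibly singular} point of an arbitrary proper $\xx$, over the \emph{non-complete, non-Noetherian} valuation ring $\oo_K$ of the infinite totally ramified extension, and show that a map whose residual tangent action is unipotent on its invertible part has no periodic point of order prime to $p$ other than its fixed points. In the smooth case this rests on the classical good-reduction analysis behind \Cref{naf_thm}; carrying it out for singular fibres and over the infinite base is the heart of the matter.
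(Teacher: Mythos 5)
There is a genuine gap, and you have named it yourself: your ``crux lemma'' about formal dynamics on the tube over the valuation ring $\oo_K$ of the infinite extension is left unproven, and it is exactly where your route breaks down. The ring $\oo_K$ is a non-discrete, non-complete, non-Noetherian valuation ring, so none of the tools you implicitly invoke apply as cited: Hartshorne's valuative criterion \cite[Thm.~II.4.7]{hartshorne} is stated for discrete valuation rings; the orbit-closure argument of \cite[Proposition 1]{naf} that underlies the claim ``unipotent residual tangent action forces $p$-power periods'' uses a finite flat algebra over a \emph{complete discrete} valuation ring, with the $p$-power order bounded by $v(p)$ --- a quantity that has no finite analogue over $\oo_K$, whose value group is dense in $\QQ$. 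Rebuilding that theory over $\oo_K$ (for possibly singular points, as you note) is a substantial undertaking that the theorem does not require.

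The missing idea is much simpler: descend to finite level before doing any dynamics. Since $\xx$ is of finite type over $\oo$, any point $P \in \xx(K)$ is already defined over some finite subextension $K' \subset K$, which is a local field, totally ramified over $F$; this is how the paper proceeds. Over the ring of integers $\oo'$ of $K'$ one may quote the finite-level bound wholesale (\autoref{naf_lemma}, following Fakhruddin and Huang), whose prime-to-$p$ bound $N \cdot ((\# k)^d - 1)$ depends \emph{only on the special fibre}; and the paper's \autoref{splfibre} observes that a totally ramified base change leaves the special fibre unchanged, since the residue field does not move. Uniformity over all of $\xx(K)$ is then immediate, with no analysis over $\oo_K$ at all. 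It is worth noting that your skeleton --- reduction to $\xx_s(\FF_q)$, the bound $m \le \#\xx_s(\FF_q)$, and the multiplier order dividing $\mathrm{lcm}_i(q^i-1)$ --- essentially reconstructs the \emph{interior} of \autoref{naf_lemma}'s proof; had you run it over a finite subextension $K'$ rather than over $\oo_K$ directly, your crux lemma would reduce to the known orbit-closure argument and your proof would collapse into the paper's. As written, however, the heart of the argument is an open claim in a setting where the standard machinery does not apply.
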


As a corollary, we get :

\begin{theorem}
\label{thmb}
Let $\oo$ be the ring of integers of a number field $F$ and $K$ be an infinite algebraic extension over $F$ which is totally ramified at a prime $\pp$ dividing $p$. For a proper variety $\xx$ over $\Spec (\oo)$, there exists a constant $m > 0$ such that $\Per_{(p)} (\xx (K), f) \le m$ for any $\oo$-morphism $f : \xx \to \xx$.
\end{theorem}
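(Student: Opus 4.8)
The plan is to deduce the global statement from the local Theorem~\ref{thma} by passing to a completion at $\pp$. Write $F_\pp$ for the completion of $F$ at $\pp$ and $\oo_\pp$ for its ring of integers; since $\pp \mid p$, this is a local field over $\QQ_p$. Base changing along $\Spec \oo_\pp \to \Spec \oo$ produces a proper variety $\xx_\pp := \xx \times_{\Spec \oo} \Spec \oo_\pp$ over $\Spec \oo_\pp$, and any $\oo$-morphism $f : \xx \to \xx$ induces an $\oo_\pp$-morphism $f_\pp : \xx_\pp \to \xx_\pp$. The reason for base changing is that the uniform constant furnished by Theorem~\ref{thma} will depend only on $\xx_\pp$ and on the chosen local extension, not on $f_\pp$, which is exactly the uniformity we need.

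Next I would build a totally ramified infinite algebraic extension of $F_\pp$ that receives $K$. Write $K = \bigcup_L L$ as the union of its finite subextensions $F \subseteq L \subseteq K$. For each such $L$, total ramification of $\pp$ in $K$ forces $\pp$ to have a unique prime $\pp_L$ above it in $L$, with residue degree one and $[L_{\pp_L} : F_\pp] = [L : F]$; thus each completion $L_{\pp_L}/F_\pp$ is finite and totally ramified, and these primes are compatible as $L$ grows. Setting $K' := \bigcup_L L_{\pp_L} \subset \overline{F_\pp}$ therefore yields a totally ramified infinite algebraic extension of $F_\pp$, together with a compatible embedding $\iota : K \hookrightarrow K'$ assembled from the embeddings $L \hookrightarrow L_{\pp_L}$. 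I emphasize that one should take this union of completions, which remains algebraic over $F_\pp$, rather than the full $\pp$-adic completion of $K$, which need not be algebraic and so would fall outside the hypotheses of Theorem~\ref{thma}.

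The embedding $\iota$ induces a map $\iota_* : \xx(K) \to \xx_\pp(K')$ (precomposition with $\Spec K' \to \Spec K$, using $\xx_\pp(K') = \Hom_\oo(\Spec K', \xx)$) which intertwines the self-maps $f$ and $f_\pp$. I would then check that $\iota_*$ is injective and period preserving: since $\xx$ is proper, hence separated, and $\Spec K' \to \Spec K$ is surjective, two $K$-points with the same image in $\xx_\pp(K')$ must already coincide, so $\iota_*$ is injective; and because $\iota_*$ is equivariant, injectivity forces the exact $f$-period of each $P$ to equal the exact $f_\pp$-period of $\iota_*(P)$. In particular a period coprime to $p$ is carried to a period coprime to $p$, whence $\Per_{(p)}(\xx(K), f) \subseteq \Per_{(p)}(\xx_\pp(K'), f_\pp)$.

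Finally, applying Theorem~\ref{thma} to the proper variety $\xx_\pp$ over $\Spec \oo_\pp$ and the totally ramified infinite algebraic extension $K'/F_\pp$ produces a constant $m > 0$, independent of $f_\pp$, with $\Per_{(p)}(\xx_\pp(K'), f_\pp) \le m$; combined with the inclusion above this gives $\Per_{(p)}(\xx(K), f) \le m$ for every $\oo$-morphism $f$. I expect the main obstacle to lie in the second and third steps: verifying carefully that ``totally ramified at $\pp$'' in the global sense passes to a genuinely totally ramified \emph{and still algebraic} local extension together with a compatible system of embeddings, and then using separatedness to guarantee that periods are preserved \emph{exactly} rather than merely up to a divisor, which is what lets the prime-to-$p$ condition transfer faithfully.
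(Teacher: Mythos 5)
Your proposal is correct and takes essentially the same approach as the paper: both arguments deduce the theorem from \autoref{thma} by base changing $\xx$ to the ring of integers of the completion $F_\pp$ and using that total ramification at $\pp$ turns the relevant completions into totally ramified extensions of $F_\pp$, so that the special fibre (and hence the bound) is unchanged. The only difference is bookkeeping: the paper reduces each periodic point to a finite subextension $K' \subset K$ and works with $\xx(K') \subset \xx(K'_\pp)$, rerunning the argument of \autoref{thma}, whereas you assemble the compatible completions $L_{\pp_L}$ into a single infinite totally ramified algebraic extension of $F_\pp$ and invoke \autoref{thma} verbatim via an injective, period-preserving map on points --- an equivalent packaging, carefully justified.
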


Let $A$ be an abelian variety with potential good reduction defined over a local field $F$ over $\QQ_p$. The finiteness of the set of torsion points in $A(F(\mu_{p^{\infty}}))$ is proved by Serre \cite{serre} and Imai \cite{imai} independently where $\mu_{p^{\infty}}$ is the group of $p$-power roots of unity. As a consequence of the previous theorem, we can get a weaker generalization :

\begin{theorem}
\label{ablian}
Assuming the setup of \autoref{thma}, let $A$ be an abelian variety defined over $\Spec (\oo)$. We define $\TT(A, K)$ be the set of all primes $\ell$ such that the group of $K$-rational $\ell$-torsion points $A(K)[\ell]$ is non trivial, i.e, $$\TT(A, K) = \set { \ell \text{ prime } | \, A(K)[\ell] \ne 0}$$ then the Dirichlet density of $\TT(A, K)$ is $0$. Moreover, $\TT(A, K) \cap \set{ \ell | \ell \not \equiv 1 (\modulo p) }$ is finite.
\end{theorem}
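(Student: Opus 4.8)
The plan is to convert the existence of $\ell$-torsion in $A(K)$ into the existence of a periodic point of a multiplication-by-$n$ endomorphism, and then feed this into \autoref{thma}. Since $A$ is given over $\Spec(\oo)$ it is proper over $\Spec(\oo)$, and for every integer $n$ the multiplication map $[n]\colon A \to A$ is an $\oo$-morphism (it is built from the group law, which is defined over $\oo$); moreover the $K$-points on which these maps act are exactly $A(K)$. The crucial point is that \autoref{thma} applied to $\xx = A$ yields a constant $m$ that bounds $\Per_{(p)}(A(K), f)$ \emph{uniformly} over all $\oo$-morphisms $f$, hence simultaneously for all the endomorphisms $[n]$. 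So a single $m$ will control every prime $\ell$ below.

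Now I would fix $\ell \in \TT(A,K)$ with $\ell \neq p$ and pick $P \in A(K)$ of exact order $\ell$ (possible since $\ell$ is prime and $A(K)[\ell]\neq 0$). Write $\ell - 1 = p^{a} b$ with $\gcd(b,p)=1$. As $(\ZZ/\ell\ZZ)^\times$ is cyclic of order $\ell-1$, it has a unique cyclic subgroup of order $b$, and I can choose an integer $n$ whose reduction generates it, so that $\operatorname{ord}_\ell(n) = b$. For this $n$ the $[n]$-period of $P$ is the least $k$ with $(n^k-1)P = 0$, i.e.\ the least $k$ with $\ell \mid n^k - 1$, which is exactly $b$. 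Since $\gcd(b,p)=1$, this period lies in $\Per_{(p)}(A(K),[n])$, so \autoref{thma} forces $b \le m$. In other words, for every prime $\ell \in \TT(A,K)$ with $\ell \ne p$, the prime-to-$p$ part of $\ell - 1$ is at most $m$.

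From here the two assertions follow. For the \emph{moreover} part: if $\ell \not\equiv 1 \pmod{p}$ and $\ell \ne p$, then $p \nmid \ell - 1$, so the prime-to-$p$ part of $\ell-1$ is $\ell - 1$ itself, whence $\ell - 1 \le m$; adjoining the single possible prime $\ell = p$ shows $\TT(A,K) \cap \set{\ell \mid \ell \not\equiv 1 \pmod p}$ is finite. For the density statement I would bound the larger set $\mathcal B = \set{\ell \text{ prime} : \ell \neq p,\ \text{prime-to-}p\text{ part of }(\ell-1) \le m}$. For any finite collection $Q$ of primes with $q > m$ and $q \ne p$, membership $\ell \in \mathcal B$ forces $q \nmid \ell - 1$ (otherwise $q$ would divide, hence be at most, the prime-to-$p$ part), i.e.\ $\ell \not\equiv 1 \pmod{q}$ for all $q \in Q$; by Dirichlet's theorem on primes in arithmetic progressions together with the Chinese Remainder Theorem, the set of such $\ell$ has Dirichlet density $\prod_{q \in Q}\bigl(1 - \tfrac{1}{q-1}\bigr)$. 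Because $\sum_{q}\tfrac{1}{q-1}$ diverges over the primes, this product can be made arbitrarily small, so the upper Dirichlet density of $\mathcal B$, and hence of $\TT(A,K)$, is $0$.

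The main obstacle is the first step: setting up the dictionary that an $\ell$-torsion point is a periodic point of the $\oo$-morphism $[n]$ whose prime-to-$p$ period equals the prime-to-$p$ part of $\ell-1$, and observing that the uniform constant of \autoref{thma} lets one run this argument across all $n$ (hence all $\ell$) at once. Once that is in place the remaining work—choosing $n$ of the correct order and the elementary density estimate—is routine.
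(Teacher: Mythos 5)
Your proof is correct, and while it rests on the same basic dictionary as the paper --- a $K$-rational point of order $\ell$ is a periodic point of a multiplication map defined over $\Spec(\oo)$, with the constant of \autoref{thma} uniform over all $\oo$-morphisms --- the execution is a genuine variant. The paper fixes a single auxiliary prime $q$ once and for all, so the $[q]$-period of an $\ell$-torsion point is the full multiplicative order $mp^n$ of $q$ modulo $\ell$; to bound the prime-to-$p$ factor $m$ it must (implicitly) pass to the iterates $[q]^{p^b}$, and it concludes that for each $a$ the primes $\ell \in \TT(A,K)$ with $\ell \not\equiv 1 \ (\modulo \, p^a)$ divide one of finitely many numbers $q^{mp^b}-1$, giving finiteness for each $a$ and density at most $1/\varphi(p^a) \to 0$. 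You instead adapt the multiplier to each $\ell$, choosing $n$ of exact order $b$ equal to the prime-to-$p$ part of $\ell-1$, so that applying \autoref{thma} to the infinite family of morphisms $[n]$ yields the stronger structural statement that the prime-to-$p$ part of $\ell-1$ is at most $m$ for every $\ell \in \TT(A,K)\setminus\set{p}$; your density endgame is then an Euler-product estimate over auxiliary primes $q>m$ rather than the paper's congruence classes modulo $p^a$. Note that your uniform bound also recovers the paper's intermediate finiteness: $\ell-1 = p^c b$ with $b \le m$ forces $\ell \equiv 1 \ (\modulo \, p^a)$ once $\ell > mp^{a-1}+1$. What each buys: the paper's fixed-$q$ argument produces explicit finite exceptional sets (divisors of the $q^{mp^b}-1$), at the cost of a terser handling of the $p$-part of the period, whereas your adapted-$n$ argument isolates the prime-to-$p$ part of $\ell-1$ directly, makes the essential reliance on the uniformity of the constant in \autoref{thma} fully explicit, and supplies the density computation that the paper leaves implicit.
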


For the projective $n$-space $\PP^n$ over any number field $F$, the finiteness of $f$-periodic points of $\PP^n(F)$ was studied by Northcott \cite{northcott}. For abelian varieties, torsion points can be thought as periodic points for different isogenies. In this case, the finiteness of torsion points over a number field is well known \cite{aec}. For abelian varieties some results are known also over infinite extensions of $\QQ$. For an abelian variety $A$ over a number field $F$, the finiteness of torsion points in $A(F^{\text{cyc}})$ was proved by Ribet \cite{ribet} where $F^{\text{cyc}} = F \QQ^{\text{ab}}$ is the cyclotomic closure of $F$. For further details, we refer readers to \cite{silverman_book} and references therein.

\subsection*{Acknowledgement}
I would like to express my sincere gratitude to Prof. C.S. Rajan. I am also indebted to Prof. Najmuddin Fakhruddin, for introducing me to this problem during a conference at the Kerala School of Mathematics. Furthermore, I would like to thank to Mr. Niladri Patra, Dr. Pritam Ghosh, Mr. Sagar Shrivastava, and Dr. Sourav Ghosh. I am grateful to Ashoka University for providing me with an excellent research environment.

\section{Main lemma}
\label{prelim}

The following lemma is a slight modification of results by Fakhruddin \cite{naf} and Huang \cite[Theorem 1.2]{huang}.

\begin{lemma}
\label{naf_lemma}
Let $p$ be a prime and $\oo$ be a discrete valuation ring over $\ZZ_p$ with residue field $k$. For a separated scheme $\xx$ over $\Spec (\oo)$, there exists a constant $m > 0$ a such that $\Per_{(p)} (\xx(\oo)) \le m$ for any $\oo$-morphism $f : \xx \to \xx$. Moreover, $m$ depends only on the special fibre $\xx \times_{\Spec(\oo)} \Spec(k)$ of $\xx$ over $\Spec(k)$.
\end{lemma}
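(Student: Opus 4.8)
The plan is to run the reduction argument of Fakhruddin \cite{naf} and Huang \cite[Theorem 1.2]{huang}, but to keep careful track of the prime-to-$p$ part of the period and to verify that every constant produced depends only on the closed fibre. Write $\pi$ for a uniformizer of $\oo$ and let $\xx_k = \xx \times_{\Spec(\oo)} \Spec(k)$ be the special fibre. Fix an $\oo$-morphism $f$ and a point $P \in \xx(\oo)$ whose $f$-period $n$ is coprime to $p$; the goal is to bound $n$ in terms of $\xx_k$ alone. Reduction modulo $\mm$ produces a point $\bar P \in \xx_k(k)$ and a self-map $\bar f$ of $\xx_k$, and the $\bar f$-period $n'$ of $\bar P$ divides $n$, hence is again coprime to $p$.

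First I would bound $n'$. The points $\overline{f^{\,j}(P)}$ for $0 \le j < n'$ are distinct $k$-rational points of $\xx_k$ forming a single $\bar f$-orbit, so $n'$ is at most the length of the longest prime-to-$p$ orbit of a self-map of $\xx_k$ on its $k$-points. This is a quantity attached to $\xx_k$ (for instance $n' \le \#\xx_k(k)$ when $k$ is finite and $\xx$ is of finite type, which is the situation arising in \autoref{thma}). Replacing $f$ by $g = f^{n'}$, the map $\bar g$ now fixes $\bar P$, while $P$ becomes a $g$-periodic point of period $r = n/n'$, still coprime to $p$, lying in the residue disc of $\bar P$. It remains to bound $r$.

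The heart of the matter is the analysis of $g$ on this residue disc. Let $x \in \xx$ be the image of $\bar P$ and $R = \widehat{\oo}_{\xx,x}$ the completed local ring; since $\bar g$ fixes $\bar P$ and $g(P)$ reduces to $\bar P$, the map $g$ induces a local $\oo$-algebra endomorphism $g^\ast$ of $R$, and $P$ corresponds to a local homomorphism $\phi_P : R \to \oo$ with $\phi_P(\mm_R) \subseteq \pi\oo$. Lifting a $k$-basis of the cotangent space $\mm_{\bar R}/\mm_{\bar R}^2$ of $\xx_k$ at $\bar P$ (where $\bar R = R/\pi R$) identifies the residue disc with a subset of $(\pi\oo)^{d'}$, $d' = \dim_k \mm_{\bar R}/\mm_{\bar R}^2$, on which $g$ acts by a power-series map $G$ whose linear part reduces to the Jacobian $\bar J \in M_{d'}(k)$ of $\bar g$ at $\bar P$. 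Tracking the leading $\pi$-adic coefficient of the orbit of $P$, the prime-to-$p$ period $r$ is forced to equal the order with which $\bar J$ rotates this leading coefficient, so that $r$ divides the prime-to-$p$ part of the order of $\bar J$ in $\mathrm{GL}_{d'}(k)$. Because prime-to-$p$ roots of unity inject into $k^{\times}$ under reduction (Teichm\"uller), this order, and hence $r$, is bounded purely in terms of $k$ and $d'$, both invariants of $\xx_k$. Combining with the bound on $n'$ gives $n = n' r \le m$ with $m$ depending only on $\xx_k$.

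The main obstacle is this last step: making precise that, once $\bar g$ fixes $\bar P$, the prime-to-$p$ period inside the residue disc is captured by the linearization $\bar J$. The subtlety is that the orbit of $P$ need not sit at a single $\pi$-adic level, and the induced disc map $G$ may carry a nonzero constant shift $\pi\vec\beta$ together with a non-invertible $\bar J$, so one cannot simply read the period off the first-order truncation modulo $\pi^2$ (a point lying deep in the disc may carry a large prime-to-$p$ period that is invisible modulo $\pi^2$). The fix is to localize at the minimal $\pi$-adic level attained along the orbit and to isolate the directions on which $\bar J$ acts invertibly, the contracting and nilpotent directions contributing only $p$-power periods, so that $r$ is governed by a genuine root of unity in $\mathrm{GL}_{d'}(k)$. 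Carrying this out uniformly in $f$, with all estimates referred back to $\xx_k$, is exactly the content of \cite[Theorem 1.2]{huang} that I would adapt.
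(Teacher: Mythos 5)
Your opening step coincides with the paper's: the period $n'$ of the reduced point is bounded by $\#\xx_s(k)$, and one replaces $f$ by $g = f^{n'}$. The gap is in everything after that. The assertion that the prime-to-$p$ period $r$ of $P$ inside the residue disc divides the prime-to-$p$ part of the order of $\bar J$ in $\mathrm{GL}_{d'}(k)$ is precisely the hard point of the whole lemma, and you do not prove it; you yourself flag the non-invertible $\bar J$ and the constant shift $\pi\vec\beta$ as ``the main obstacle,'' and the proposed fix --- localize at the minimal $\pi$-adic level and split off the directions on which $\bar J$ is invertible --- is only a gesture. Over $\oo$ there is no canonical $g$-invariant lift of the Fitting decomposition of $k^{d'}$ into the invertible and nilpotent parts of $\bar J$; the points of a single cycle may attain their minimal levels in different coordinates at different iterates, so there is no single ``leading coefficient'' on which the recursion is the linear (or even affine) map you describe; and when the completed local ring is not formally smooth the disc is only a subset of $(\pi\oo)^{d'}$, which any splitting would also have to respect. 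Nor does the appeal to \cite[Theorem 1.2]{huang} close the gap: Huang's proof is not a residue-disc linearization but, like Fakhruddin's \cite{naf}, an algebraic orbit-closure argument, so ``adapting'' it means abandoning your disc analysis rather than completing it.

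That orbit-closure move is exactly how the paper sidesteps your obstacle. It takes $Z$ to be the \emph{reduced Zariski closure of the $g$-orbit} of $P$; then $Z = \Spec(A)$ with $A$ a finite local reduced, hence $\oo$-torsion-free, $\oo$-algebra, and since $g$ permutes the finite orbit cyclically, $g$ restricts to an automorphism of $A$ of \emph{finite order} --- invertibility is automatic, so there are no contracting or nilpotent directions to isolate in the first place. One then bounds $\dim_k(\mm/\mm^2) \le d+1$ and the order of the image of $g$ in $\mathrm{GL}(\mm/\mm^2)$ by $(\#k)^d - 1$ \cite[Section 3]{huang}, and invokes \cite[Proposition 1]{naf}: a finite-order automorphism of such an $A$ acting trivially on $\mm/\mm^2$ has $p$-power order, at most $p^{v(p)}$. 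That factor $p^{v(p)}$ is the only quantity in the bound depending on $\oo$ rather than on the special fibre, and it is exactly the factor annihilated by the prime-to-$p$ hypothesis, yielding $n \le \#\xx_s(k) \cdot ((\#k)^d - 1)$, which depends only on $\xx_s$ as the ``moreover'' clause requires. To repair your write-up, replace the residue-disc analysis by this passage to $Z$: it converts the possibly non-invertible germ of $g$ at $\bar P$ into a finite-order automorphism of a finite local algebra, after which only linear algebra on $\mm/\mm^2$ remains.
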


\begin{proof}
Let $P \in \xx(\oo)$ be a $f$-periodic point of period $n$. We denote the special fibre over $\Spec (k)$ by ${\xx}_s$ and the reduced morphism by $f$ only. Let ${P}_s \in {\xx}_s(k)$ be the reduction of $P$. ${P}_s$ is also a periodic point. Suppose $m$ is the period of ${P}_s$.

Clearly $f^m (P)$ restricted to the special fibre is a fixed point. Let $Z$ be the reduced Zariski closure of the $g$-orbit of $P$ where $g = f^m$. Hence, $Z$ is finite over $\Spec(\oo)$ with a unique closed point. So, $Z = \Spec (A)$ for some finite local $\oo$-algebra $A$. As $A$ is reduced, it is torsion free $\oo$-module.

As $g$ preserves the orbit and $Z$ is reduced, $g$ induces a map $Z \to Z$. So, it induces an $\oo$-linear map from $A$ to $A$. But as $g^n$ is the identity on $A$, hence $g \in \text{Aut} (A)$ and is of finite order.

Let $\mm$ be the maximum ideal of $A$. By \cite[Section 3]{huang}, $\dim_k (\mm / \mm^2) \le d + 1$ where $d$ is the maximum dimension of cotangent spaces of points over the special fibre $\xx_s$. There exists $r$ such that $g^r$ induces identity on $k$-vector space $\mm / \mm^2$. By \cite[Proposition 1]{naf}, the order of $g^r$ is a $p$-power, say $p^t$. It can be shown that $r$ is bounded by $(\# k)^d - 1$ \cite[Section 3]{huang} and $t$ is bounded by $v (p)$ \cite[Proposition 3]{naf} where $v$ is the valuation over $\oo$.

So, $n$ is bounded by $$N \cdot ((\# k)^d - 1) \cdot p^{v(p)}$$ where $N = \# \xx_s (k)$ \cite[Theorem 1.2]{huang}. But also if $n$ is coprime to $p$, then it is bounded by $$N \cdot ((\# k)^d - 1).$$ 
\end{proof}

\begin{remark}
If we consider $\xx$ is proper, then the same statement can be said about $K$-rational points where $K$ is the fraction field of $\oo$. Here the existence of reduction will be guaranteed by properness \cite[Chapter II.4]{hartshorne}. Also if the special fibre $\xx_s$ is non-singular then $d$ is the same as the dimension of $\xx_s$ \cite{huang}.
\end{remark}

\section{Proofs}
\label{ramified}

\subsection{Proof of the main theorem} 
Assuming the setup of \autoref{thma}, let $\oo$ be the ring of integers of $F$ with residue field $k$. Suppose $F'$ is a totally ramified finite extension over $F$. Let $\oo'$ be the ring of integers of $F'$. Clearly $\oo'$ has residue field $k$ also.

We consider $\xx$ is a separated scheme of finite type over $\Spec (\oo)$ and $\xx'$ is the base change of $\xx$ over $\Spec (\oo')$. Let us denote the special fibres of $\xx$ and $\xx'$ by $\xx_s$ and $\xx'_s$ respectively. The following fact is well known : 

\begin{lemma}
\label{splfibre}
$\xx_s$ is isomorphic to $\xx'_s$ over $\Spec (k)$.    
\end{lemma}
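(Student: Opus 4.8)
The plan is to deduce the isomorphism formally from the transitivity of fibre products, since $\xx'$ is by construction the base change of $\xx$ along $\Spec(\oo') \to \Spec(\oo)$. First I would unwind the definitions of the two special fibres:
$$\xx_s = \xx \times_{\Spec(\oo)} \Spec(k), \qquad \xx'_s = \xx' \times_{\Spec(\oo')} \Spec(k),$$
where in the second expression I use that $\oo'$ has residue field $k$ as well. Substituting $\xx' = \xx \times_{\Spec(\oo)} \Spec(\oo')$ turns the right-hand side into the iterated fibre product
$$\xx'_s = \left(\xx \times_{\Spec(\oo)} \Spec(\oo')\right) \times_{\Spec(\oo')} \Spec(k).$$

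The heart of the matter is the standard cancellation property of fibre products: for morphisms $T \to S' \to S$ and any $\xx \to S$, associativity together with $S' \times_{S'} T \cong T$ yields a canonical isomorphism $(\xx \times_S S') \times_{S'} T \cong \xx \times_S T$, where $T \to S$ is the composite. Applying this with $S = \Spec(\oo)$, $S' = \Spec(\oo')$ and $T = \Spec(k)$ collapses the iterated product above to $\xx \times_{\Spec(\oo)} \Spec(k) = \xx_s$, and the isomorphism is manifestly one of $k$-schemes.

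The single point that deserves care is that the composite $\Spec(k) \to \Spec(\oo') \to \Spec(\oo)$ appearing in the cancellation really is the closed immersion $\Spec(k) \hookrightarrow \Spec(\oo)$ defining $\xx_s$. This amounts to checking that $\oo \to \oo' \to k$ coincides with the quotient $\oo \to \oo/\mm_\oo = k$, which holds because the local homomorphism $\oo \to \oo'$ carries $\mm_\oo$ into $\mm_{\oo'}$ and induces the identity on the common residue field $k$ --- exactly the total-ramification hypothesis. Beyond this bookkeeping there is no real obstacle, the statement being entirely formal once the residue fields are matched up.
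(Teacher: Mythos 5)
Your proof is correct and is essentially the paper's own argument in global form: the paper reduces to the affine case (``the question is local'') and invokes the tensor identity $(A \otimes_{\oo} \oo') \otimes_{\oo'} k \cong A \otimes_{\oo} k$, which is precisely the affine incarnation of the fibre-product cancellation $(\xx \times_S S') \times_{S'} T \cong \xx \times_S T$ that you use. Your extra verification that the composite $\oo \to \oo' \to k$ agrees with the quotient $\oo \to \oo/\mm_{\oo} = k$ (via triviality of the residue extension in a totally ramified extension) makes explicit a compatibility the paper leaves implicit.
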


\begin{proof}
As the question is local, we can consider $\xx$ be affine, say $\Spec (A)$ where $A$ is a finite $\oo$-algebra. We know there is an isomorphism $$(A \otimes_{\oo} \oo') \otimes_{\oo'} k \cong A \otimes_{\oo} k.$$ Hence, the special fibre $\xx_s$ of $\xx$ is isomorphic to $\xx'_s$.
\end{proof}

\begin{proof}[Proof of \autoref{thma}]
Let $P \in \xx(K)$ be a $f$-periodic point of period $n$ where $n$ is coprime to $p$. We can assume $P \in \xx(K')$ where $K' \subset K$ is a local field of $F$. By \autoref{naf_lemma} and remark after that, $n$ is bounded and the bound depends only on the special fibre $\xx_s$. But by \autoref{splfibre} all base changes have isomorphic special fibre and this completes the proof.
\end{proof}

\begin{remark}
\label{remark1}
The following example shows the failure of boundedness for all periods : Let $F = \QQ_p$ and $K = \QQ_p(\mu_{p^{\infty}})$ for some odd prime $p$. Let us consider $\xx = \mathbb{P}^1_{\ZZ_p}$ and $f : X \to X$ by $$[x : y] \mapsto [x^q : y^q]$$ for some odd prime $q \ne p$. Clearly all $p$-power roots of unity are periodic points. Hence, $p$-power part of the period is not bounded.
\end{remark}

\subsection{Corollary to number field}

Assuming the setup of \autoref{thmb}, let $F_\pp$ be the completion of $F$ at $\pp$ and $\oo'$ be the ring of integers of $F_\pp$. 

\begin{proof}[Proof of \autoref{thmb}]
Let $P \in X(K)$ be a $f$-periodic point of period $n$ where $n$ is prime to $p$. We can assume $P \in \xx (K')$ where $K' \subset K$ is a number field over $F$. We can consider that $\xx$ is defined over $\Spec (\oo')$. As $\xx(K') \subset \xx(K'_\pp)$, following the same argument for the proof of \autoref{thma} completes the proof.
\end{proof}

\subsection{Application to abelian varieties}

\begin{proof}[Proof of \autoref{ablian}]
Let us fix a prime $q$. For a prime $\ell \ne q$, $P \in A(K)[\ell]$ is a periodic point under the isogeny $[q] : A \to A$. Suppose the period of $P$ is $mp^n$ where $(m, p) = 1$. Then we get $$q^{mp^n} \, P = P$$ Hence $q^{mp^n} \equiv 1 \; (\modulo \ell)$. 

Now suppose $\ell \not \equiv 1 \; (\modulo p^{a})$ where $a \ge 1$, then $n \le a - 1$. By \autoref{thma}, for each $b \le a - 1$, there are only finitely many choices for $m$ and for one such $m$ there are finitely many primes $\ell$ satisfying $q^{mp^b} \equiv 1 \, (\modulo \ell)$. Thus, we get $\TT(A, K) \cap \set{ \ell \, {\rm prime} \, : \, \ell \not \equiv 1 \; (\modulo p^a)}$ is finite. This completes the proof.
\end{proof}

\begin{remark}
It is easy to show that if an elliptic curve $E$ has good reduction, then the set $\TT(E, K)$ is finite.
\end{remark}

\begin{proposition}
Let $E$ be an elliptic curve over $\QQ_p$ with bad reduction and $$K_0 \subset K_1 \subset K_2 \subset \dots$$ be a tower of totally ramified local fields and $K_{n+1}$ over $K_n$ is purely wildly ramified for large $n$. Then $\TT(E, \cup K_n)$ is finite.
\end{proposition}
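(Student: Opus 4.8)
The plan is to pass to a finite extension over which $E$ becomes semistable and then to separately bound the admissible primes $\ell \ne p$ in the good and the split multiplicative cases, the unbounded $p$-part being irrelevant since it contributes at most the single prime $p$. Before anything else I would extract the structural consequence of the hypothesis that is actually being used: since $K_{n+1}/K_n$ is purely wildly ramified (totally ramified of $p$-power degree) for all large $n$, the tame part of the ramification of $K=\bigcup K_n$ over $K_0$ stabilizes. Equivalently, $K/K_0$ has a unique maximal tamely ramified subextension $K^{t}/K_0$, and $[K^{t}:K_0]$ is finite and prime to $p$ while $K/K^{t}$ is totally wildly ramified. In particular $K$ contains a tamely and totally ramified extension of $K_0$ of degree $\ell$ only for the finitely many primes $\ell$ dividing $[K^{t}:K_0]$. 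This bounded-tame-ramification property is the real content of the wildness hypothesis and will be invoked twice.

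Next I would reduce to semistable reduction. By the semistable reduction theorem for elliptic curves there is a finite extension $L/K_0$ over which $E$ acquires good or split multiplicative reduction (if $E$ is additive with potential multiplicative reduction, I also enlarge $L$ to kill the quadratic twist). Since $K/K_0$ is totally ramified, the residue field of $KL$ equals that of $L$, so $KL/L$ is again totally ramified, and its tame part is still finite. As $\TT(E,K)\subseteq\TT(E,KL)$, it suffices to bound the primes $\ell\ne p$ with $E(KL)[\ell]\ne 0$. In the good reduction case I would use that reduction is injective on prime-to-$p$ torsion: a nonzero $P\in E(KL)[\ell]$ reduces to a point of exact order $\ell$ in $\tilde E(k_L)$, where $k_L$ is the finite residue field of $L$ (this is the residue field of $KL$ as well, by total ramification), whence $\ell$ divides the fixed integer $\#\tilde E(k_L)$ and only finitely many $\ell$ can occur.

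In the split multiplicative case I would use the Tate parametrization $E\cong \overline{L}^{\times}/q^{\ZZ}$ with $v_L(q)>0$ and the resulting exact sequence of Galois modules $0\to\mu_\ell\to E[\ell]\to\ZZ/\ell\ZZ\to 0$. A nonzero point of $E(KL)[\ell]$ either lies in $\mu_\ell$, forcing $\zeta_\ell\in KL$; but $L(\zeta_\ell)/L$ is unramified while $KL/L$ is totally ramified, so then $\zeta_\ell\in L$ and $\ell\mid \#k_L-1$, a finite condition. Otherwise the point surjects onto $\ZZ/\ell\ZZ$, and a representative is an $\ell$-th root $u$ of $q$; Galois-invariance under $G_{KL}$ gives $\sigma(u)/u\in\mu_\ell\cap q^{\ZZ}=\{1\}$, so $u\in KL$, i.e. $q$ has an $\ell$-th root in $KL$.

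The main obstacle—the one genuinely computational point—is controlling when $q^{1/\ell}\in KL$, and this is exactly where the bounded tame ramification is indispensable. I would split on divisibility by $v_L(q)$. If $\ell\mid v_L(q)$ there are only finitely many such $\ell$, since $v_L(q)$ is a fixed positive integer. If $\ell\nmid v_L(q)$, then $v_L(q)/\ell$ has exact denominator $\ell$, so $L(q^{1/\ell})/L$ is totally and tamely ramified of degree $\ell$ (tame because $\ell\ne p$); its occurrence inside the totally ramified $KL/L$ forces $\ell$ to divide the finite tame ramification index of $KL/L$, again leaving only finitely many $\ell$. (One checks that a single $\ell$-th root of $q$ in $KL$ already produces this subextension, the possibility $\zeta_\ell\in KL$ having been disposed of above; and that without the wildness hypothesis one could extract $q^{1/\ell}$ for infinitely many $\ell$, so the statement would genuinely fail.) Combining the good and split multiplicative cases, $\TT(E,KL)$ is finite, and hence so is $\TT(E,K)$.
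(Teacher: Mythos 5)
Your strategy is sound and genuinely different from the paper's, but one intermediate claim is false as stated: from $K/K_0$ totally ramified it does \emph{not} follow that $KL/L$ is totally ramified with residue field $k_L$. Composita of totally ramified extensions can acquire unramified parts: for odd $p$, take $K_1 = \QQ_p(\sqrt{p})$ and $L = \QQ_p(\sqrt{up})$ with $u$ a non-square unit; both are totally ramified over $\QQ_p$, yet $K_1L$ contains $\sqrt{u}$ and so is unramified quadratic over $L$. This breaks, as written, both places where you invoke total ramification of $KL/L$: the identification of the residue field of $KL$ with $k_L$ in the good-reduction case, and the deduction $\zeta_\ell \in L$ in the split multiplicative case. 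Fortunately the step is repairable by weakening ``trivial'' to ``bounded'': since $[K_nL : K_n] \le [L:K_0]$ and each $K_n$ has residue field $\FF_p$, the residue fields of the $K_nL$ stabilize at a fixed finite field $k'$; and the prime-to-$p$ part of $e(K_nL/L)$ divides the prime-to-$p$ part of $e(K_nL/K_n)\,e(K_n/K_0)$, which is bounded because $e(K_nL/K_n) \le [L:K_0]$ and the prime-to-$p$ part of $e(K_n/K_0)$ stabilizes by the wildness hypothesis (the same stabilization argument you use to define $K^t$, which is correct). With the substitutions $\ell \mid \#\widetilde{E}(k')$, $\ell \mid \#k' - 1$, and ``$\ell$ divides the bounded tame ramification index of $K_nL/L$,'' all three of your finiteness conclusions survive; note also that every point of $E(KL)$ lies in some complete field $K_nL$, so the formal-group injectivity and the Tate-curve Galois computation (which is carried out correctly, including $\mu_\ell \cap q^{\ZZ} = \{1\}$) apply verbatim.

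Modulo that repair, your route differs from the paper's in an instructive way. The paper never passes to semistable reduction: it works over each $K_n$ directly with the filtration $E_1(K_n) \subset E_0(K_n) \subset E(K_n)$, kills prime-to-$p$ torsion in $E_1(K_n)$ via the formal group, observes that $E'_{\ns}(\FF_p)$ is one fixed finite group because the tower is totally ramified, and bounds the component group $E(K_n)/E_0(K_n)$ by N\'eron theory --- order at most $4$ except in the split multiplicative case, where it is cyclic of order $v_n(\Delta)$, whose prime-to-$p$ part stabilizes by wildness. Your Tate-parametrization step --- new $\ell$-torsion forces $q^{1/\ell} \in KL$, hence a tame totally ramified degree-$\ell$ subextension --- is the analytic incarnation of exactly that component-group bound, since $E(K_n)/E_0(K_n) \cong \ZZ/v_n(q)\ZZ$ for a Tate curve and $v_n(\Delta) = v_n(q)$ there. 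So the two proofs agree on the key mechanism (wild ramification cannot create prime-to-$p$ growth in $v(q)$), but yours is self-contained ramification theory at the cost of the semistable-reduction step and the compositum bookkeeping above, while the paper's is shorter by citing Silverman and N\'eron models, at the cost of leaving implicit how the reduction type behaves along the tower.
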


\begin{proof}
Let $E'$ be the reduction of the elliptic curve modulo $p$. For each $n$, there is a map $$\phi_n : E(K_n) \to E'(\FF_p)$$
Let $E_0 (K_n) = \phi_n^{-1} (E'_{\ns} (\FF_p))$ where $E'_{\ns}$ is the non-singular part of the reduction $E'$.  By \cite[Proposition VII.2.1]{aec}, $E_0 (K_n)$ is a subgroup of $E(K_n)$ and the map $E_0 (K_n) \to E'_{\ns} (\FF_p)$ is surjective. Hence we can consider an exact sequence like $$0 \to E_1 (K_n) \to E_0 (K_n) \to E'_{\ns} (\FF_p) \to 0$$ where $E_1 (K_n) = \ker \left( E_0 (K_n) \to E'_{\ns} (\FF_p) \right)$. 

By \cite[Proposition VII.3.1]{aec}, $E_1 (K_n)$ does not contain any prime-to-$p$ torsion and by theory of N\'{e}ron models, we can get $E(K_n) / E_0 (K'_n)$ is cyclic of order $v_n (\Delta)$ in the case of split multiplicative reduction or otherwise finite of order less than equal $4$ \cite[Theorem VII.6.1]{aec}. Here $\Delta$ is the discriminant of $E$ and $v_n$ is the $p$-adic valuation in $K_n$. Since prime-to-$p$ factor of $v_n(\Delta)$ is stable for large $n$, this completes the proof.
\end{proof}

\begin{remark}
This result can be proved for any local $F$ instead of $\QQ_p$. Hence, this proposition covers the elliptic curve case of the result by Serre \cite{serre} and Imai \cite{imai}.
\end{remark}

Now, we can raise the following question :

\begin{question}
If each $K_n$ is tamely ramified over $\QQ_p$, can the set $\TT (E, \cup K_n)$ be infinite?
\end{question}

\subsection{Finiteness of periodic points}

Suppose $X$ is a projective variety and $f : X \to X$ be a self map satisfying the hypothesis of \autoref{thma}. We will assume there exists a line bundle $\LL$ on $X$ such that $f^{*} (\LL) \otimes \LL^{-1}$ is an ample line bundle. We will get similar results like Fakhruddin \cite[Lemma 2]{naf}. 

\begin{corollary}
The set of $K$-rational $f$-periodic points with prime-to-$p$ periods is finite.
\end{corollary}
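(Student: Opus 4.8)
The plan is to use \autoref{thma} to collapse this dynamical question into a static geometric one, and then to exploit the ampleness of $M := f^{*}\LL \otimes \LL^{-1}$ to force the relevant fixed locus to be $0$-dimensional; the point of proceeding this way is that it avoids any appeal to Northcott's theorem, which is unavailable over the infinite extension $K$. First I would reduce to a single iterate. By \autoref{thma} the set $S := \Per_{(p)}(X(K), f)$ is bounded by a constant $m$, so it is a finite subset of $\NN$. If $S = \emptyset$ there is nothing to prove; otherwise set $N = \mathrm{lcm}(S)$ and $g = f^{N}$. Every $f$-periodic point $P \in X(K)$ whose period $n$ is prime to $p$ has $n \in S$, hence $n \mid N$, and therefore $g(P) = f^{N}(P) = P$. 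Thus all the points we must count lie in the fixed-point subscheme $X^{g} \subset X$, defined as the pullback of the diagonal along $(g, \mathrm{id}) : X \to X \times X$ and closed because $X$ is separated. It now suffices to prove that $X^{g}$ is finite, since then $X^{g}(\overline{K})$ is finite and a fortiori so is $X^{g}(K)$.

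Next I would show $X^{g}$ is $0$-dimensional using $M$, working over $\overline{K}$ (where ampleness is preserved and hyperplane sections are available). A telescoping computation gives the isomorphism
\[
g^{*}\LL \otimes \LL^{-1} = (f^{N})^{*}\LL \otimes \LL^{-1} \cong \bigotimes_{i=0}^{N-1} (f^{i})^{*} M.
\]
Suppose, for contradiction, that $X^{g}$ has a component of positive dimension; cutting by general hyperplanes I obtain an integral curve $C \subset (X^{g})_{\mathrm{red}}$. Since the inclusion $\iota_{C} : C \hookrightarrow X$ satisfies $g \circ \iota_{C} = \iota_{C}$, we get $g^{*}\LL|_{C} \cong \LL|_{C}$, whence $\deg_{C}\!\left(g^{*}\LL \otimes \LL^{-1}\right) = 0$. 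On the other hand, each $(f^{i})^{*} M$ is nef, being the pullback of the ample (hence nef) line bundle $M$ along the morphism $f^{i}$, so $\deg_{C}(f^{i})^{*} M \ge 0$ for every $i$, while the $i = 0$ term is $\deg_{C} M > 0$ by ampleness. Summing over $i$ gives $\deg_{C}\!\left(g^{*}\LL \otimes \LL^{-1}\right) \ge \deg_{C} M > 0$, a contradiction. Hence $\dim X^{g} = 0$ and $X^{g}$ is finite.

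The obstacle I expect to be the crux is precisely the infinite extension: over $K$ one cannot argue, as one would over a number field, that the periodic points have bounded canonical height and then invoke Northcott, because $K$ fails the Northcott property. The device above sidesteps this entirely, since boundedness of periods from \autoref{thma} converts the periodic points into $\overline{K}$-points of the genuinely finite scheme $X^{g}$, and finiteness of the point set of a $0$-dimensional scheme holds over any field. The only real geometric input is that ampleness of $M$ forbids $g$ from fixing a curve pointwise, which is the same mechanism underlying \cite[Lemma 2]{naf}; the care needed is only in handling non-reduced structure on $X^{g}$ (by passing to $(X^{g})_{\mathrm{red}}$) and in checking that each $(f^{i})^{*}M$ is nef rather than ample, so that the telescoped sum is bounded below by the single ample term.
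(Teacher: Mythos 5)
Your proof is correct and takes essentially the same route as the paper's: bound the prime-to-$p$ periods via \autoref{thma}, pass to an iterate $g = f^{N}$ fixing all points in question, and telescope $f^{*}\LL \otimes \LL^{-1}$ to force the fixed locus to be $0$-dimensional --- the paper works with the reduced closure $Y$ of the periodic points and the fact that an ample bundle cannot restrict trivially to a positive-dimensional $Y$, which is the same mechanism as your degree computation on a curve in $(X^{g})_{\mathrm{red}}$. One refinement on your side worth noting: since you use only nefness of each $(f^{i})^{*}M$ together with ampleness of the $i=0$ term, you sidestep the paper's unjustified opening reduction ``we can assume $f$ to be finite,'' which the paper needs so that each pullback $(f^{i})^{*}(f^{*}\LL \otimes \LL^{-1})$, and hence the telescoped product, stays ample.
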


\begin{proof}
We can assume $f$ to be finite. Let $Y$ be the closure of all $f$-periodic points with prime-to-$p$ period. Clearly there is a positive integer $m$ such that $f^m$ is the identity on $Y$. Now $$(f^m)^{*} (\LL) \otimes \LL^{-1} = \bigotimes_{i=1}^{m-1} (f^i)^{*} (f^{*} (\LL) \otimes \LL^{-1})$$ is also ample, but their restriction to $Y$ is trivial. Hence, $Y$ must be $0$-dimensional.
\end{proof}

\section{Remarks on an analogue of \autoref{thma}}

\begin{proposition}
\label{ffields}
Let $\oo$ be a discrete valuation ring with residue field $k$ of characteristic $0$. We consider $\xx$ to be a separated scheme of finite type over $\Spec (\oo)$ and a self-map $f : \xx \to \xx$ be defined over $\Spec (\oo)$. Let $p$ be a rational prime and $\Per_{(p)} (\xx_s (k), f_s)$ is finite. Suppose for every $d \ge 1$ all extensions of degree $d$ over $k$ contain only finitely many $n$-th roots of unity where $n$ is prime to $p$. Then, $\Per_{(p)} (\xx(\oo), f)$ is finite.
\end{proposition}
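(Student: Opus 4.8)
The plan is to follow the strategy of \autoref{naf_lemma}, replacing the residue-characteristic-$p$ inputs of Fakhruddin and Huang by characteristic-$0$ analogues. Fix $P \in \xx(\oo)$ with $f$-period $n$ coprime to $p$, let $\pi$ be a uniformizer of $\oo$, and let $P_s \in \xx_s(k)$ be its reduction, of $f_s$-period $m$. Since $m \mid n$, $m$ is prime to $p$, so $m \in \Per_{(p)}(\xx_s(k), f_s)$, which is finite by hypothesis; hence $m$ ranges over a fixed finite set. Put $g = f^m$, so that the reduction of $g$ fixes $P_s$ and the $g$-period of $P$ equals $N := n/m$, again prime to $p$. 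Exactly as in the proof of \autoref{naf_lemma}, the reduced Zariski closure $Z$ of the $g$-orbit of $P$ is finite over $\Spec(\oo)$ with a single closed point; write $Z = \Spec(A)$ with $A$ a reduced, hence $\oo$-torsion-free and $\pi$-adically separated, finite local $\oo$-algebra on which $g$ acts as an automorphism of order $N$. The whole problem reduces to bounding $N$.

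First I would identify the residue field of $A$. The section $P : \Spec(\oo) \to Z$ is a retraction of $Z \to \Spec(\oo)$, so it induces a surjection $A \to \oo$; reducing modulo $\mm_\oo$ and noting that this map kills the unique maximal ideal $\mm$ of $A$ (the whole orbit reduces to the single point $P_s$) shows $A/\mm \cong k$. Thus the closed point of $Z$ is $k$-rational and the cotangent spaces below are genuine $k$-vector spaces. Next I would show that the order of $g$ is detected on the cotangent space of the special fibre. Because $A$ is $\oo$-torsion-free, $g(\pi)=\pi$ identifies the action of $g$ on each graded piece $\pi^i A/\pi^{i+1}A$ with the action $g_s$ induced by $g$ on $A_s := A/\pi A$; since $k$ has characteristic $0$, a finite-order automorphism trivial on the associated graded is unipotent, hence trivial, so $\mathrm{ord}(g)=\mathrm{ord}(g_s)$. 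Running the same argument for the $\mm_s$-adic filtration of the Artinian local ring $A_s$, and using that the graded ring $\mathrm{gr}_{\mm_s}(A_s)$ is generated over $A_s/\mm_s = k$ by its degree-one part $\mm_s/\mm_s^2$, gives $\mathrm{ord}(g_s)=\mathrm{ord}\bigl(g_s \mid \mm_s/\mm_s^2\bigr)$. Hence $N$ equals the order of the $k$-linear automorphism induced by $g$ on the $k$-vector space $V := \mm_s/\mm_s^2$.

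It remains to bound $\dim_k V$ and then $N$. The closed immersion $Z_s \hookrightarrow \xx_s$ yields a surjection of cotangent spaces at $P_s$, so $\dim_k V \le d$, where $d$ is the (finite) maximum embedding dimension of the finite-type $k$-scheme $\xx_s$; crucially this bound is independent of $P$. The eigenvalues of $g \mid V$ over $\overline{k}$ are roots of unity whose orders have least common multiple $N$, and since $N$ is prime to $p$ each such eigenvalue is a prime-to-$p$ root of unity generating an extension of $k$ of degree at most $\dim_k V \le d$. By hypothesis there are only finitely many such roots of unity, so their possible orders lie in a fixed finite set; as $N$ is the least common multiple of at most $d$ of them, $N$ is bounded. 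Finally $n = mN$ with $m$ and $N$ each confined to a finite set, so $\Per_{(p)}(\xx(\oo), f)$ is finite.

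The main obstacle is the middle step: transferring the order of $g$ to the cotangent space of the special fibre. In residue characteristic $p$ this is supplied by Fakhruddin's pro-$p$ structure of the kernel together with Huang's bound; here it must instead rest on the characteristic-$0$ vanishing of finite-order unipotents, applied successively to the $\pi$-adic and $\mm_s$-adic filtrations, together with the observation that the relevant residue field is exactly $k$. Once these are in place, the finiteness of prime-to-$p$ roots of unity in bounded-degree extensions of $k$ converts the eigenvalue bound into a bound on $N$, while the orbit-closure construction is routine, being identical to that of \autoref{naf_lemma}.
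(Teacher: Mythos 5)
Your proposal is correct and takes essentially the same route as the paper's proof: decompose the period as (period of the reduction $P_s$) times (order of $g$ on the reduced orbit-closure algebra $A$), transfer that order to a cotangent space using that a finite-order automorphism which is unipotent with respect to a separated filtration must be trivial in characteristic $0$, and then bound the order via prime-to-$p$ roots of unity lying in bounded-degree extensions of $k$. Your extra touches --- the explicit identification $A/\mm \cong k$ via the section, and the two-step ($\pi$-adic, then $\mm_s$-adic) filtration giving the cotangent bound $d$ in place of the single $\mm$-adic filtration on $A$ --- are presentational refinements of the same argument, not a different approach.
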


\begin{proof}
Let $P \in \xx(\oo)$ have $f$-period $n$ where $n$ is coprime to $p$. Let $P_s \in \xx_s(k)$ be the reduction of $P$ at the special fibre. Let $r$ be the period of $P_s$. Then $r \mid n$ and we can write $n = r \cdot m$. Hence $P_s$ is a fixed point for $g = f^r$. 

Let $Z$ be the reduced Zariski closure of $g$-orbit of $P$. We can consider $Z = \Spec(A)$ where $A$ is reduced torsion-free local $\oo$-algebra and $g$ induces an $\oo$-endomorphism of $A$ [cf. \autoref{naf_lemma}]. Let $\mm$ be the maximal ideal of $A$. Clearly $g(\mm) \subset \mm$ and $g$ induces a $k$-endomophism of $\mm / \mm^2$.

Let the order of $g$ in $\End_k (\mm/\mm^2)$ be $s$. Then clearly, $s \mid m$ and we can write $m = s \cdot m'$. Following the argument by \cite[Proposition 1]{naf}, we can bound $s$ by some $\delta > 0$ which only depends on the scheme $\xx$ and the field $k$. Then $g^s$ becomes a unipotent map with respect to the filtration of $A$ by powers of $\mm$ and $g^s$ must be identity as $k$ is of characteristic 0. Hence $$n = r \cdot s \le B \cdot \delta$$ where $B$ is the bound of the set $\Per_{(p)} (\xx_s (k), f_s)$ and this completes the proof.
\end{proof}  

Let $F$ be a local field $\QQ_p$. Let $L$ be a finite extension over $F$ which is totally ramified. Let $\mu_{(p)} (L)$ be the set of $n$-th roots of unity in $L$ where $n$ is coprime to $p$.  It is easy to show that the set $\mu_{(p)} (L)$ is bounded independent of $L$. Indeed, $[F : \QQ_p]^2$ works as a bound for all possible $n$-th roots of unity that can occur. If $K$ is an infinite extension over $F$ which is totally ramified, the set $\mu_{(p)}(K)$ will be bounded. 
 
Suppose, $X$ is a separated scheme of finite type over $K \left[ [T] \right]$ and $X_s$ is the special fibre over $K$. Let $f: X \to X$ be a map such that the map restricted to the special fibre $f_s : X_s \to X_s$ satisfies the hypothesis of \autoref{thma}. Then using \autoref{ffields}, we get 

\begin{theorem}
$\Per_{(p)} (X(K \left[ [T] \right]), f)$ is finite.
\end{theorem}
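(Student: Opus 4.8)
The plan is to apply \autoref{ffields} with the discrete valuation ring $\oo = K[[T]]$. First I would check that $K[[T]]$ is a DVR with uniformizer $T$ and residue field $K[[T]]/(T) \cong K$; since $K \subset \overline{\QQ_p}$ the residue field $K$ has characteristic $0$, as \autoref{ffields} requires. The special fibre of $X$ over $\Spec(K)$ is $X_s$ and $f$ reduces to $f_s$, so the only things left to verify are the two substantive hypotheses of \autoref{ffields}. The finiteness of $\Per_{(p)}(X_s(K), f_s)$ is immediate: by assumption $f_s$ satisfies the hypotheses of \autoref{thma}, so its prime-to-$p$ periods on $X_s(K)$ are bounded by a fixed constant, and a bounded set of positive integers is finite.

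The remaining hypothesis is that for each $d \ge 1$, every degree-$d$ extension $L/K$ contains only finitely many $n$-th roots of unity with $n$ coprime to $p$; this is where I would use that $K/\QQ_p$ is totally ramified. Embedding $L$ in $\overline{\QQ_p}$, suppose $\zeta \in L$ is a primitive $n$-th root of unity with $(n,p) = 1$. Then $\QQ_p(\zeta)/\QQ_p$ is unramified, of degree equal to the multiplicative order of $p$ modulo $n$. Since $K/\QQ_p$ is totally ramified, the field $K \cap \QQ_p(\zeta)$ is both totally ramified and unramified over $\QQ_p$, hence equals $\QQ_p$; as $\QQ_p(\zeta)/\QQ_p$ is Galois this gives $[K(\zeta):K] = [\QQ_p(\zeta):\QQ_p]$. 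From $K(\zeta) \subseteq L$ I then obtain that the order of $p$ modulo $n$ is at most $d$, so $n \mid p^j - 1$ for some $1 \le j \le d$ and hence $n \le p^d - 1$. Only finitely many orders $n$ can therefore occur in $L$, so $\mu_{(p)}(L)$ is finite.

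Having verified all hypotheses, \autoref{ffields} yields that $\Per_{(p)}(X(K[[T]]), f)$ is finite, which is the claim. The main obstacle is the second step: the discussion preceding the statement only controls prime-to-$p$ roots of unity inside $K$ itself, whereas \autoref{ffields} demands the same over all finite extensions of the residue field $K$. The resolution is the observation that prime-to-$p$ roots of unity generate unramified extensions of $\QQ_p$, so that the residue-degree constraint forced by $[L:K] = d$, together with the total ramification of $K$, bounds their possible orders uniformly in terms of $d$.
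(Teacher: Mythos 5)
Your proof is correct and follows the same route as the paper: the paper's own argument for this theorem is precisely to apply \autoref{ffields} with $\oo = K[[T]]$ (a DVR with residue field $K$ of characteristic $0$), the special-fibre hypothesis being supplied by \autoref{thma}. The worthwhile difference is that you complete a verification the paper only sketches: the discussion preceding the theorem bounds $\mu_{(p)}(L)$ only for $L = K$ and for finite \emph{totally ramified} extensions $L$ of $F$, whereas \autoref{ffields} demands the roots-of-unity condition for \emph{every} degree-$d$ extension of the residue field $K$, and such an extension is in general not totally ramified, so the paper's stated discussion does not literally cover the hypothesis being invoked. Your argument --- $\QQ_p(\zeta)/\QQ_p$ is unramified when $(n,p)=1$, total ramification of $K$ forces $K \cap \QQ_p(\zeta) = \QQ_p$, and Galois theory then gives $[K(\zeta):K] = [\QQ_p(\zeta):\QQ_p] = \mathrm{ord}_n(p) \le d$, hence $n \le p^d - 1$ --- is exactly the mechanism the paper leaves implicit, and each step (including the use of finite Galois $\QQ_p(\zeta)/\QQ_p$ over the possibly infinite base $K$) is sound. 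One small adjustment to match the paper's actual setting: in \autoref{thma} the field $K$ is totally ramified over a local field $F$ that may itself have residue degree $f_0 > 1$ over $\QQ_p$ (the sentence ``Let $F$ be a local field $\QQ_p$'' should evidently read ``over $\QQ_p$,'' given the bound $[F:\QQ_p]^2$ quoted just after), so $K \cap \QQ_p(\zeta)$ need only lie in the degree-$f_0$ maximal unramified subfield of $F$; your estimate then becomes $\mathrm{ord}_n(p) \le d f_0$ and $n \le p^{d f_0} - 1$, which still yields the finiteness of $\mu_{(p)}(L)$ and hence the theorem.
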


\section{Remarks on some other infinite extensions}
\label{others}

Let $F$ be a number field and $F^{(n)}$ be the compositum of all extensions $K$ over $F$ of degree at most $n$. Clearly, $F^{(n)}$ is Galois over $F$. Suppose $\pp$ is a prime in $F$ and $\pp'$ be any prime in $F^{(n)}$ lying above $\pp$. Then, it can be shown that their local degrees $[F^{(n)}_{\pp'} : F_{\pp}]$ are bounded \cite[Proposition 1]{bz01}. Suppose, $X$ is a proper variety over $\oo_F$ where $\oo_F$ is the ring of integers of $F$ and $f : X \to X$ be a self map. Then for sufficiently large two rational primes, \autoref{naf_lemma} gives us

\begin{theorem}
$\Per (X(F^{(n)}), f)$ is finite.
\end{theorem}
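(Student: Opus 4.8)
The plan is to control the \emph{full} period by applying the prime-to-$p$ bound of \autoref{naf_lemma} at two independent primes. The crucial feature of that lemma is that its constant is \emph{uniform over all $\oo$-morphisms} and depends only on the special fibre, and I would use both of these. First I would fix two distinct rational primes $p$ and $q$, chosen sufficiently large, and select a prime $\pp$ of $F$ above $p$ together with a prime $\mathfrak{q}$ above $q$. By \cite[Proposition 1]{bz01} the local degrees $[F^{(n)}_{\pp'} : F_\pp]$ stay bounded as $\pp'$ runs over the primes of $F^{(n)}$ above $\pp$, and likewise above $\mathfrak{q}$; hence the residue field extensions occurring in $F^{(n)}$ above $\pp$ (resp.\ $\mathfrak{q}$) have uniformly bounded degree, so these residue fields are of uniformly bounded cardinality. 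Feeding the base change of $X$ to these local rings into \autoref{naf_lemma}---using properness, via the remark following it, to extend rational points to integral points---produces constants $M_p$ and $M_q$ that depend only on the special fibres of $X$ at $\pp$ and $\mathfrak{q}$, and in particular are independent of which finite subextension of $F^{(n)}$ a given point lives in.

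The key step is to pass from the full period to its prime-to-$p$ part by replacing $f$ with a suitable iterate. Let $P \in X(F^{(n)})$ be an $f$-periodic point of period $N$, defined over a finite subextension $K' \subset F^{(n)}$, and write $N = p^a m$ with $\gcd(m,p)=1$. Localizing $K'$ at a prime above $\pp$, the point $P$ becomes an integral point over a local ring whose residue field has cardinality bounded as above, and the iterate $g = f^{p^a}$ has $P$ as a periodic point of period exactly $m$, which is prime to $p$. The uniform bound of \autoref{naf_lemma}, applied to $g$, therefore gives $m \le M_p$. Running the same argument at $\mathfrak{q}$ with $h = f^{q^b}$, where $q^b$ is the exact power of $q$ dividing $N$, shows that the prime-to-$q$ part $N/q^b$ is at most $M_q$; since $p^a$ is coprime to $q$ it divides $N/q^b$, whence $p^a \le M_q$. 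Combining the two estimates yields $N = p^a \cdot m \le M_q \cdot M_p$, a bound independent of $P$ and of $K'$, so that $\Per(X(F^{(n)}), f)$ is finite.

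The hard part is the uniformity. Along the infinite tower $F^{(n)}$ the residue fields could a priori grow without bound, while \autoref{naf_lemma} only controls periods through the cardinality of the (finite) special fibre; without a uniform bound on those residue fields the constants $M_p, M_q$ would be vacuous. The decisive input is exactly \cite[Proposition 1]{bz01}: the boundedness of local degrees along $F^{(n)}$ keeps the residue fields, and hence $M_p$ and $M_q$, bounded. Once this is in hand, the only extra ingredient is the elementary remark that passing from $f$ to $f^{p^a}$ converts the full period into a prime-to-$p$ period of an iterate of $f$---precisely what is needed to upgrade the prime-to-$p$ statement of \autoref{naf_lemma} to control of the full period.
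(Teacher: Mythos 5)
Your proposal is correct and follows essentially the same route as the paper, which also derives the theorem from the bounded local degrees of \cite[Proposition 1]{bz01} together with \autoref{naf_lemma} applied at two sufficiently large rational primes. The paper leaves the deduction as a one-line sketch; your write-up merely fills in the details it leaves implicit, namely passing to the iterates $f^{p^a}$ and $f^{q^b}$ to convert the full period into prime-to-$p$ and prime-to-$q$ periods, and using the boundedness of residue fields to make the constants of \autoref{naf_lemma} uniform over all finite subextensions of $F^{(n)}$.
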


\begin{remark}
The previous result also can be proved using \cite[Theorem 2]{naf} for a sufficiently large prime.  
\end{remark}

In general, let us assume $K$ be a Galois extension over a number field $F$. Let $\pp$ be a prime in $F$ and $\pp'$ be any prime of $K$ lying over $\pp$. If the local degrees $[K_{\pp'} : F_\pp]$ are finite for infinitely many primes, then we can prove the previous result for $K$-rational points.

\begin{remark}
It is enough to have finiteness of local degrees for primes over two sufficiently large rational primes.
\end{remark}

\bibliographystyle{amsalpha}
\bibliography{ref}

\end{document}